\def\sort{{\rm sort}}
\def\ab{{\bf a}}
\def\bb{{\bf b}}
\def\eb{{\bf e}}
\def\tb{{\bf t}}
\def\ub{{\bf u}}
\def\vb{{\bf v}}
\def\xb{{\bf x}}
\def\yb{{\bf y}}
\def\zb{{\bf z}}
\def\wb{{\bf w}}
\def\Fc{{\mathcal F}}
\def\Pc{{\mathcal P}}
\def\Qc{{\mathcal Q}}
\def\Bc{{\mathcal B}}
\def\Ac{{\mathcal A}}
\def\RR{{\mathbb R}}
\def\ZZ{{\mathbb Z}}
\def\QQ{{\mathbb Q}}
\newtheorem{Theorem}{Theorem}[section]
\newtheorem{Corollary}[Theorem]{Corollary}
\newtheorem{Proposition}[Theorem]{Proposition}
\theoremstyle{definition}
\newtheorem{Example}[Theorem]{Example}
\newtheorem{Conjecture}[Theorem]{Conjecture}
\begin{document}

\title{Toric ideals of Minkowski sums of unit simplices}
\author{Akihiro Higashitani and Hidefumi Ohsugi}

\address{Akihiro Higashitani,
Department of Pure and Applied Mathematics,
Graduate School of Information Science and Technology,
Osaka University,
Suita, Osaka 565-0871, Japan}
\email{higashitani@ist.osaka-u.ac.jp}

\address{Hidefumi Ohsugi,
Department of Mathematical Sciences,
School of Science and Technology,
Kwansei Gakuin University,
Sanda, Hyogo 669-1337, Japan} 
\email{ohsugi@kwansei.ac.jp}

\subjclass[2010]{13P10, 52B20}
\keywords{Integer decomposition property, Gr\"obner basis}

\begin{abstract}
In this paper, we discuss the toric ideals of the Minkowski sums of unit simplices. 
More precisely, we prove that the toric ideal of the Minkowski sum of unit simplices has a squarefree initial ideal and is generated by quadratic binomials. 
Moreover, we also prove that the Minkowski sums of unit simplices have the integer decomposition property. 
Those results are a partial contribution to Oda conjecture and B{\o}gvad conjecture. 
\end{abstract}

\maketitle

\section{Introduction}

\subsection{Terminologies}

Let $P \subset \RR^d$ be a {\em lattice polytope}, which is a convex polytope all of whose vertices belong to the standard lattice $\ZZ^d$, of dimension $d$. 
We say that $P$ has the {\em integer decomposition property} (or is {\em IDP} for short) if for any positive integer $n$ and $\alpha \in n P \cap \ZZ^d$, 
there exist $\alpha_1,\ldots,\alpha_n \in P \cap \ZZ^d$ such that $\alpha=\alpha_1+ \cdots + \alpha_n$. 

We say that $P$ is {\em smooth} if for each vertex $v$ of $P$, 
the set of the primitive edge direction vectors of $v$ forms a $\ZZ$-basis for $\ZZ^d$. 
A smooth polytope is often said to be a {\em Delzant polytope}. 
It is known that $P$ is smooth if and only if the projective toric variety associated to $P$ is smooth. 

Let $\Ac=\{\ab_1,\ldots,\ab_m\} \subset \ZZ^d$. 
We allow $\ab_i = \ab_j$ for some $1\le i < j\le m$, that is,
$\Ac$ may be a multiset.
We say that $\Ac$ is a {\em configuration} if there is a hyperplane $H \subset \RR^d$ 
which is of the form $c_1x_1+\cdots+c_dx_d = 1$ with $c_i \in \QQ$ such that $\Ac \subset H$. 
Assume that $\Ac$ is a configuration. Let $K[\tb^\pm]=K[t_1^\pm,\ldots,t_d^\pm]$ be the Laurent polynomial ring in $d$ variables over a field $K$. 
Given a lattice point $\ab=(a_1,\ldots,a_d) \in \ZZ^d$, we set $\tb^\ab=t_1^{a_1} \cdots t_d^{a_d} \in K[\tb^\pm]$. 
The {\em toric ring} of $\Ac$ is the subalgebra $K[\Ac]$ of $K[\tb^\pm]$ generated by $\tb^{\ab_1},\ldots,\tb^{\ab_m}$. 
The {\em toric ideal} $I_\Ac$ of $\Ac$ is the defining ideal of the toric ring $K[\Ac]$, i.e., it is 
the kernel of a surjective ring homomorphism $\pi:K[x_1,\ldots,x_m] \rightarrow K[\Ac]$ defined by $\pi(x_i)=\tb^{\ab_i}$. 
It is known that $I_\Ac$ is generated by homogeneous binomials. 
Given a lattice polytope $P \subset \RR^d$, the toric ideal of $P$ stands for the toric ideal of the configuration $\Ac_P \subset \ZZ^{d+1}$, 
where $\Ac_P=\{(\alpha,1) \in \ZZ^{d+1} : \alpha \in P \cap \ZZ^d\}$. 
We refer the readers to \cite{binomialideals, Sturmfels} for the introduction to toric ideals and their Gr\"obner bases. 

\subsection{Two conjectures on smooth polytopes}

Traditionally, the theories of lattice polytopes and toric geometry have been developing by interacting each other. 
In particular, the following two conjectures are of great importance from viewpoints of not only combinatorics but toric geometry: 

\begin{Conjecture}[Oda Conjecture]\label{Oda}
Every smooth polytope is IDP. 
\end{Conjecture}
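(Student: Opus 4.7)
The statement is Oda's conjecture in full generality, which is a long-standing open problem; rather than pretend I can settle it, I shall describe the natural line of attack and indicate where it stalls.

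The plan is to induct on the dimension $d$. The base cases $d=1,2$ are classical: every lattice segment is IDP, and in dimension two every lattice polygon is IDP regardless of smoothness. For the inductive step, given a smooth $d$-polytope $P$ and a point $\alpha \in nP \cap \ZZ^d$, I want to produce $\alpha = \alpha_1 + \cdots + \alpha_n$ with each $\alpha_i \in P \cap \ZZ^d$. The smoothness hypothesis should enter through the local structure at a vertex: after an affine change of coordinates, near a vertex $v$ the polytope looks like the standard positive orthant with $v$ at the origin and primitive edge directions $\eb_1,\ldots,\eb_d$, so I would try to ``peel off'' a lattice point $\alpha_i \in P \cap \ZZ^d$ near a suitably chosen vertex of $nP$ and reduce to an IDP problem for $\alpha-\alpha_i \in (n-1)P$.

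A more structural variant, which is morally the route pursued in the present paper, is to first prove a decomposition theorem: every smooth polytope is a Minkowski sum of explicit ``atoms'' (unit simplices, hypersimplices, or products thereof) for which IDP is already known, and then show that IDP is preserved under the relevant Minkowski-sum operation. The authors of the present paper carry out the second half of this program in the case where the summands are unit simplices and even obtain the much stronger Gr\"obner-theoretic conclusion that $I_{\Ac_P}$ has a squarefree quadratic initial ideal. Granting such a decomposition, Oda's conjecture would then follow at once.

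The main obstacle, and the reason the conjecture remains open, is that neither approach is known to close. The peeling argument has no obvious inductive hypothesis: after subtracting $\alpha_i$, the residual polytope need not be smooth, since smoothness is a global condition on the normal fan and is not inherited by the natural operations on lattice points. The Minkowski-decomposition approach founders on the absence of any structural classification of smooth polytopes as Minkowski sums of tractable atoms, together with the well-known fact that IDP is fragile under Minkowski sum. It is precisely this impasse that motivates restricting, as in the present paper, to the subclass of Minkowski sums of unit simplices, where both the decomposition and the preservation of IDP are accessible by direct combinatorial means.
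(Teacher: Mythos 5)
The statement you were asked to prove is Oda's Conjecture, which the paper explicitly states as an open conjecture, not a theorem; the paper offers no proof of it (and indeed proves only a special case, for nestohedra). You correctly recognized that this is a long-standing open problem and declined to fabricate a proof, which is the right call. Your sketch of the two natural lines of attack --- peeling near a vertex, and Minkowski decomposition into tractable atoms --- and your diagnosis of where each stalls (loss of smoothness under peeling; absence of a decomposition theorem and fragility of IDP under Minkowski sums) is accurate and matches the framing of the paper, which restricts to Minkowski sums of unit simplices precisely because there both halves of the program go through. There is no gap to report, since neither you nor the paper claims to have settled the conjecture.
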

\begin{Conjecture}[B{\o}gvad Conjecture]\label{Bogvad}
The toric ideal of every smooth polytope is generated by quadratic binomials. 
\end{Conjecture}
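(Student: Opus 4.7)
The plan is to establish B{\o}gvad's Conjecture by producing, for every smooth polytope $P$, a regular unimodular triangulation of $P$ whose Stanley--Reisner ideal is generated in degree two --- a \emph{flag} unimodular triangulation. By the standard correspondence between regular triangulations and initial ideals of $I_{\Ac_P}$, such a triangulation yields a squarefree quadratic initial ideal of $I_{\Ac_P}$, which at once implies the quadratic generation asserted here (and, as a bonus, also settles Oda's Conjecture). This is the only route currently known to succeed in any nontrivial setting, including the Minkowski sums of unit simplices treated in the present paper, so it is the one I would pursue.

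The first substantive step exploits the defining property of smoothness directly: at each vertex $v$ of $P$, the primitive edge vectors form a $\ZZ$-basis of $\ZZ^d$, so the tangent cone at $v$ is unimodular and admits a canonical unimodular triangulation. I would try to glue these local triangulations into a single regular triangulation of $P$, either via a pulling construction from a cleverly chosen lattice point or by writing down an explicit regular height function whose restriction to each vertex cone agrees with the canonical one. The second step would be an induction on dimension, leveraging the fact that every facet of a smooth polytope is again smooth: a pulling triangulation from a vertex $v$ reduces $P$ to its non-incident facets, and the inductive hypothesis would supply flag unimodular triangulations of each. The delicate part is verifying that the join of these facet triangulations with $v$ is itself flag --- equivalently, that no new minimal non-face of size three is introduced --- which I would attempt via a Koszul-filtration argument on the toric ring, organized so as to consume the facet hypothesis one vertex at a time.

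The principal obstacle is the mismatch in scale: smoothness is a purely local condition at each vertex, whereas quadratic generation of $I_{\Ac_P}$ is a global property of the full configuration $\Ac_P$. Nothing in the hypothesis controls interior lattice points of $P$ that lie far from any vertex cone, and their interaction with a vertex-driven construction is where every known attempt has faltered in dimension four and beyond. This is precisely why the present paper restricts to the combinatorially transparent case of Minkowski sums of unit simplices, where the summand structure provides an explicit global skeleton on which to hang the triangulation; any successful attack on the full conjecture will, I expect, require a genuinely new geometric input intrinsic to smoothness --- for instance, a systematic way to extend a unimodular triangulation of the boundary to the interior without introducing obstructions to flagness --- that has so far eluded the field.
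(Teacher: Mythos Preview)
The statement you are attempting to prove is labelled \emph{Conjecture} in the paper, and the paper does not prove it; it merely states B{\o}gvad's Conjecture as motivation and then establishes the special case of generalized permutohedra of the form $P_n^Y(\{y_I\})$ (Theorem~\ref{main}(c)). There is thus no ``paper's own proof'' to compare against.

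Your proposal is not a proof either, and you essentially say so yourself. The plan you sketch --- glue local unimodular triangulations at each vertex into a global flag regular unimodular triangulation via pulling and induction on facets --- has a genuine gap at exactly the point you identify: nothing in the smoothness hypothesis controls the lattice points of $P$ away from the vertex cones, so there is no mechanism to ensure that a pulling triangulation is unimodular (let alone flag) on the interior, nor that the inductive join over facets introduces no size-three non-faces. Your own final paragraph concedes that this obstacle is where all known attempts fail and that a ``genuinely new geometric input'' would be required. A proof proposal that ends by acknowledging the key step ``has so far eluded the field'' is a survey of the difficulty, not a proof. If the task was to prove the conjecture, the honest answer is that it remains open; if the task was to prove what the paper actually proves, you should instead target Theorem~\ref{main}(c) for the restricted class $P_n^Y(\{y_I\})$, where the paper's argument goes through Shibuta's contraction-ideal machinery rather than any direct triangulation of the polytope.
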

Note that there is no logical connection between these two conjectures. 
In fact, there are many examples of smooth polytopes which are IDP but their toric ideals are not generated by quadratic binomials, and vice versa. 
There are some partial results on these two conjectures. See \cite{Bruns}. 
Those conjectures are true in dimension $2$. 
Recently, it was proved in \cite{BHHHJKM} that Conjecture~\ref{Oda} is true for centrally symmetric smooth polytopes of dimension $3$, 
but it is still open in dimension (at least) $3$ in general.

\subsection{Generalized Permutohedra}

We recall {\em generalized permutohedra}, introduced by Postnikov \cite[Section 6]{Postnikov}. 
Given a positive integer $n$, let $[n]=\{1,2,\ldots,n\}$. We define a convex polytope $P_n^Z(\{z_I\})$ as follows: 
$$P_n^Z(\{z_I\})=\left\{(t_1,\ldots,t_n) \in \RR^n : \sum_{i=1}^n t_i= z_{[n]}, \sum_{i \in I} t_i \geq z_I \text{ for each } I \subsetneq [n]\right\},$$
where $\{z_I\}$ is a given collection of parameters with $z_I \geq 0$ for each nonempty $I \subset [n]$ and belongs to a certain full-dimensional polyhedral subset of $\RR^{2^n-1}$. 
Note that $P_n^Z(\{z_I\})$ is a usual permutohedron if $z_I=z_J$ whenever $|I|=|J|$. 

On the other hand, let $$P_n^Y(\{y_I\})=\sum_{I \subset [n]}y_I \Delta_I,$$
where $\{y_I\}$ is a given collection of parameters with $y_I \geq 0$ for each nonempty $I \subset [n]$, 
and $\sum$ stands for the Minkowski sum of polytopes. It is proved in \cite[Proposition 6.3]{Postnikov} that 
for a given $\{y_I\}$, 
we see that $P_n^Y(\{y_I\})=P_n^Z(\{z_I\})$
by setting $z_I=\sum_{J \subset I}y_J$. 
Thus $\{P_n^Y(\{y_I\}) : y_I \ge 0 \mbox{ for } I \subset [n]\}$ is a special (but large enough) class of generalized permutohedra. 
Moreover, {\em nestohedra}, which are a wide class of smooth polytopes, can be described as $P_n^Y(\{y_I\})$ with $y_I \ge 0$ for $I \subset [n]$ 
and those include many other important classes of smooth polytopes. See Section~\ref{nestohedron} for more details. 
The main object of this paper is $P_n^Y(\{y_I\})$ in the case $y_I \in \ZZ_{\geq 0}$.

\subsection{Results}

The main theorem of the present paper is the following: 
\begin{Theorem}
\label{main}
Let $y_I \in \ZZ_{\geq 0}$ for each $I \subset [n]$. Then the generalized permutohedron $P_n^Y(\{y_I\})$ satisfies the following{\rm :} 
\begin{itemize}
\item[(a)] 
$P_n^Y(\{y_I\})$ is IDP{\rm ;} 
\item[(b)]
The toric ideal of $P_n^Y(\{y_I\})$ has a squarefree initial ideal{\rm ;}
\item[(c)] The toric ideal of $P_n^Y(\{y_I\})$ is generated by quadratic binomials. 
\end{itemize}
\end{Theorem}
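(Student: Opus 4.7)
My plan is to obtain all three conclusions as consequences of a single stronger statement: the toric ideal $I_P$ of $P:=P_n^Y(\{y_I\})$ admits a squarefree quadratic Gr\"obner basis under a suitable term order. This gives (b) and (c) directly, and then (a) follows from the standard fact that a squarefree initial ideal forces the configuration $\Ac_P$ to be normal, which for a lattice polytope is equivalent to IDP.

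\textbf{Setup.} Since $y_I\Delta_I$ is literally the Minkowski sum of $y_I$ copies of $\Delta_I$, I may write $P=\Delta_{I_1}+\cdots+\Delta_{I_m}$ with $m=\sum_I y_I$, where each $I\subset[n]$ appears $y_I$ times in the multiset $(I_1,\dots,I_m)$. Every lattice point $\alpha\in P\cap\ZZ^n$ then admits at least one decomposition $\alpha=\eb_{i_1}+\cdots+\eb_{i_m}$ with $i_k\in I_k$, which one needs to verify along the way. I would work in the auxiliary polynomial ring $R=K[z_{i,k}\,:\,1\le k\le m,\,i\in I_k]$ together with the toric map $\psi\colon R\to K[t_1,\dots,t_n]$, $z_{i,k}\mapsto t_i$, and identify $K[P]$ with the image under $\psi$ of the $K$-span of the multidegree-$(1,1,\dots,1)$ monomials in the natural $\ZZ^m$-grading on $R$.

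\textbf{Gr\"obner basis.} For each $\alpha\in P\cap\ZZ^n$ I would fix a canonical tuple $(i_1^\alpha,\dots,i_m^\alpha)$ realizing $\alpha$ (say, the lex-smallest), and associate to the toric generator $x_\alpha$ the canonical monomial $z(\alpha)=\prod_k z_{i_k^\alpha,k}\in R$. Using the induced lex order on the variables $x_\alpha$, my candidate Gr\"obner basis is the set of sorting binomials
$$x_\alpha x_\beta \;-\; x_{\alpha'} x_{\beta'},$$
where the canonical decompositions of $\alpha$ and $\beta$ differ in a single slot $k$ and $(\alpha',\beta')$ is obtained from $(\alpha,\beta)$ by swapping $i_k^\alpha\leftrightarrow i_k^\beta$ in a lex-decreasing way. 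Each such binomial is squarefree quadratic, and I would prove these form a Gr\"obner basis of $I_P$ by a bubble-sort S-pair argument: any binomial in $I_P$ is witnessed by two multidegree-$(1,\dots,1)$ monomials of $R$ with the same image under $\psi$, and iteratively applying single-slot swaps transforms one witness into the other.

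\textbf{Main obstacle.} The technical heart is verifying that the quadratic sortings really close up under S-pair reduction. The edge ring of the bipartite incidence graph of $(I_1,\dots,I_m)$ can carry genuine toric relations of degree $\ge 4$ whenever that graph has an induced cycle of length $\ge 6$ --- for instance when $I_k=\{k,\,k{+}1\bmod n\}$ cyclically, producing an $8$-cycle with a quartic relation --- so the quadratic sortings are not directly inherited from the ambient Cayley toric ideal. After passing to $K[P]$ by identifying canonical monomials that project to the same lattice point, however, every higher-cycle relation must collapse to a composition of $4$-cycle swaps. I expect the proof to rest on exploiting the unit-simplex hypothesis on each $\Delta_{I_k}$, which constrains the freedom in a decomposition to a single index per slot, so that any two decompositions of $\alpha+\beta$ differ by a sequence of two-slot swaps; combined with a judicious choice of canonical form ensuring each intermediate swap strictly decreases the term order, this should yield the desired reduction.
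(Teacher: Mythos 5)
Your plan targets a strictly stronger statement than the paper actually proves, and the gap you flag under ``Main obstacle'' is exactly where the proposal stops being a proof. The paper does \emph{not} claim that $I_{P}$ has a quadratic Gr\"obner basis; it proves two separate, weaker facts: (b) some term order gives a squarefree initial ideal, and (c) the ideal is generated in degree two. In the paper's construction (via Shibuta's contraction-ideal machinery, Proposition~\ref{ShibutaTheorem} and Proposition~\ref{contractionGB}), the Gr\"obner basis of $\ker(\varphi_B\circ\varphi_A)$ consists of the quadratic sorting relations of the Segre product (this is $\ker\varphi_A$) together with lifts of the cycle binomials $f_i$ coming from the unimodular edge polytope $\Qc_\Fc$ (this is $\ker\varphi_B$). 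Those lifts generally have degree larger than two; the initial ideal is squarefree because $\Qc_\Fc$ is unimodular, not because everything is quadratic. Separately, the paper shows that each lift reduces \emph{modulo the ideal} to quadratics plus a linear form in $J$, which is what gives (c) --- but that does not turn the Gr\"obner basis itself into a quadratic one.

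So the concrete problem with your proposal is the unfinished S-pair argument. You correctly observe that the bipartite incidence graph of $(I_1,\dots,I_m)$ can have long induced even cycles, and that these produce genuine high-degree toric relations that are \emph{not} composites of single-slot swaps in the ring $R$. You then assert that after identifying monomials with the same image in $K[P]$, these ``must collapse to compositions of $4$-cycle swaps,'' but you give no argument, and this is precisely the content that would have to be proved. Worse, if your plan succeeded it would establish a quadratic Gr\"obner basis for all $\Pc_\Fc$, which is not claimed in the paper (nor, to my knowledge, in the literature) and is a substantially stronger conjecture-level statement. Absent a proof of S-pair closure, the route is not a proof. Your observation that (a) follows from (b) via the standard ``squarefree initial ideal $\Rightarrow$ normal $\Rightarrow$ IDP'' implication is fine, but note the paper derives (a) independently and more directly from unimodularity of the Cayley sum $\Qc_\Fc$ (Proposition~\ref{bipartite_unimodular}) combined with Tsuchiya's Cayley-to-Minkowski result (Proposition~\ref{TsuchiyaCayley}), which does not require establishing (b) first.

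To salvage your approach you would need one of two things: either (i) actually prove that any two decompositions of $\alpha+\beta$ subordinate to $(I_1,\dots,I_m)$ are connected by single-slot swaps inside the fiber of $\psi$ (effectively reproving the paper's part~(c) but with the extra burden of compatibility with a fixed term order), or (ii) abandon the quadratic-Gr\"obner-basis claim and, like the paper, decouple ``squarefree initial ideal'' (for which Shibuta's theorem on contraction ideals, fed the Segre sorting basis for $\ker\varphi_A$ and the unimodularity of $\Qc_\Fc$ for $\ker\varphi_B$, does the work) from ``quadratic generation'' (which requires the explicit telescoping of each cycle lift into two-slot swaps and a member of $J$, as done at the end of the paper's proof).
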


Since nestohedra, which can be written as $P_n^Y(\{y_I\})$ above, are smooth polytopes, we immediately obtain the following corollary 
that is a partial contribution to Oda and B{\o}gvad Conjectures: 

\begin{Corollary}
Oda Conjecture and B{\o}gvad Conjecture are true for nestohedra. 
\end{Corollary}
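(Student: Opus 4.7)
The plan is to establish (b) and (c) simultaneously by producing an explicit quadratic squarefree Gr\"obner basis of the toric ideal $I_P$ of $P := P_n^Y(\{y_I\})$, and then to deduce (a) from the resulting normality of $K[\Ac_P]$.

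First I would unfold the Minkowski sum by listing the subsets $I\subset[n]$ with $y_I>0$ as $I_1,\ldots,I_N$, each $I$ appearing $y_I$ times, where $N=\sum_I y_I$. A preliminary step, which I would establish from the polymatroid structure of Minkowski sums of matroid polytopes, is that every lattice point of $P$ admits a (generally non-unique) representation $\eb_{a_1}+\cdots+\eb_{a_N}$ with $a_j\in I_j$.

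The heart of the argument will be a coordinate-wise sorting procedure. Given an ordered pair of lattice points $(\alpha,\beta)$ of $P$ with representations $\alpha=\sum_j\eb_{a_j}$ and $\beta=\sum_j\eb_{b_j}$, I would set $a'_j=\min(a_j,b_j)$ and $b'_j=\max(a_j,b_j)$ coordinate-by-coordinate. Since $\{a'_j,b'_j\}=\{a_j,b_j\}\subset I_j$, the new sequences remain valid, and $\eb_{a'_j}+\eb_{b'_j}=\eb_{a_j}+\eb_{b_j}$, so $\alpha'+\beta'=\alpha+\beta$ and $y_\alpha y_\beta-y_{\alpha'}y_{\beta'}\in I_P$. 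I would then assemble a candidate Gr\"obner basis from such sorted-swap binomials and verify, via the standard sortable-set argument in the spirit of Sturmfels~\cite{Sturmfels}, that it is a Gr\"obner basis under a diagonal-type term order adapted to the sorting, with squarefree quadratic leading terms $y_\alpha y_\beta$ (for $\alpha\ne\beta$).

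The main obstacle will be the non-uniqueness of representations: two different initial representations $(\ab,\bb)$ and $(\ab'',\bb'')$ of the same pair $(\alpha,\beta)$ may lead to different sorted outputs $(\alpha',\beta')$ and $(\alpha'',\beta'')$, so I must show the two binomials $y_\alpha y_\beta-y_{\alpha'}y_{\beta'}$ and $y_\alpha y_\beta-y_{\alpha''}y_{\beta''}$ differ by an element already generated by sorted binomials. Technically this amounts to controlling the fibres of the summing map from sequences to lattice points; the exchange axioms for the matroid $\Delta_{I_j}$ should supply the required local swap moves. Once a quadratic squarefree Gr\"obner basis of $I_P$ is in hand, (b) and (c) are immediate, and (a) follows because squarefreeness of the initial ideal forces $K[\Ac_P]$ to be normal, which for the height-one configuration $\Ac_P$ is equivalent to $P$ being IDP.
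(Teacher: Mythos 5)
Your proposal does not reproduce the paper's route (the paper deduces the corollary immediately from Theorem~\ref{main}, which it proves via Shibuta's contraction-ideal machinery and the Cayley sum / edge polytope); instead you re-prove the content of Theorem~\ref{main} by a direct sorting argument on the lattice-point variables $y_\alpha$. That is a legitimate strategy in principle, but the sorting operation you propose does not in fact give a Gr\"obner basis, and the failure is already visible in the smallest nontrivial example.

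Take $n=2$ and $\Fc=(\{1,2\},\{1,2\})$, so $P=2\Delta_{\{1,2\}}$ with lattice points $(2,0)$, $(1,1)$, $(0,2)$. (To make this literally a nestohedron, take $\Bc=\{\{1\},\{2\},\{1,2\}\}$ with $y_{\{1,2\}}=2$; the extra singleton simplices are points and only translate the polytope.) The toric ideal is $\langle y_{(2,0)}y_{(0,2)}-y_{(1,1)}^2\rangle$. In the nestohedron version each lattice point other than the middle one has a \emph{unique} representation $\sum_j \eb_{a_j}$ with $a_j\in I_j$: $(2,0)$ corresponds to $a=(1,1)$ and $(0,2)$ to $b=(2,2)$. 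Your coordinate-wise $\min/\max$ sort applied to this pair returns $(a',b')=(a,b)$, so $(\alpha,\beta)=((2,0),(0,2))$ is ``already sorted'' and produces no binomial; and $(\alpha,\beta)=((1,1),(1,1))$, with reps $(1,2)$ and $(1,2)$, is also sorted. Hence \emph{both} monomials of the unique defining relation are ``sorted'' under your operation, so the sorted monomials cannot be a $K$-basis of $K[\Ac_P]_2$, and no term order can make your sorted monomials the standard monomials. The obstruction is structural: coordinate-wise $\min/\max$ is the correct Gr\"obner move for the \emph{Segre product} toric ideal, but that ideal lives in the tuple-indexed ring $K[x_{j_1,\ldots,j_m}]$, not in $K[y_\alpha]$. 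The surjection from tuples to lattice points collapses distinct tuples (e.g.\ $(1,2)$ and $(2,1)$ both give $(1,1)$), and it is precisely under this collapse that the relation $y_{(2,0)}y_{(0,2)}-y_{(1,1)}^2$ appears, coming from the Segre swap $x_{1,2}x_{2,1}-x_{1,1}x_{2,2}$ --- a move that, seen on lattice points, \emph{un-sorts} the pair $((1,1),(1,1))$. Your plan restricts the sort to a fixed representation of a fixed pair $(\alpha,\beta)$ with $\alpha\ne\beta$, so it never sees this relation.

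This is exactly the difficulty the paper's machinery is designed to handle: it keeps the tuple ring $K[\xb]$ in play, identifies $\ker\psi$ with $I_{\Pc_\Fc}+J$ where $J$ is the linear ideal identifying tuple variables with the same sorted content, and then combines a sorting Gr\"obner basis for the Segre ideal $\ker(\varphi_A)$ with a squarefree Gr\"obner basis for the edge-polytope ideal $\ker(\varphi_B)$ via Shibuta's Propositions~\ref{ShibutaTheorem} and~\ref{contractionGB}, finally discarding the part lying in $J$. Your own ``main obstacle'' paragraph correctly senses that non-uniqueness of representations is the crux, but the fix you sketch (showing two sorted outputs agree modulo the already-constructed binomials) addresses a weaker problem than what actually occurs; some machinery that works at the level of tuples, or an explicit lifting scheme, is needed. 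As a secondary point, the inference ``squarefree initial ideal $\Rightarrow$ IDP'' needs the additional (true but not free) fact that $\ZZ\Ac_P$ saturates to the ambient lattice; the paper sidesteps this entirely by deriving IDP from unimodularity of the Cayley sum $\Qc_\Fc$ (Propositions~\ref{bipartite_unimodular} and~\ref{TsuchiyaCayley}).
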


\subsection{Organization}
The present paper is organized as follows. In Section~\ref{nestohedron}, we recall the notion of nestohedra and graph associahedra. 
Note that graph associahedra are a subclass of nestohedra. 
In Section 3, we review the key notion of generalized nested configurations, introduced by Shibuta \cite{Shibuta}. 
Finally, in Section 4, we prove Theorem~\ref{main} by using the notion of
generalized nested configurations.

\subsection*{Acknowledgements}
The authors are grateful to anonymous referees
for their careful reading of the manuscript and for their comments. 
The authors are partially supported by JSPS KAKENHI $\sharp$17K14177 and $\sharp$18H01134.

\bigskip

\section{Nestohedra and Graph associahedra}\label{nestohedron}

In this section, we recall the notion of nestohedra and graph associahedra. 
As explained in Introduction, nestohedra are a kind of generalized permutohedra, and graph associahedra are a kind of nestohedra. 
See \cite[Section 1.5]{BuchstaberPanov} for the introduction to nestohedra and graph associahedra.

Given a subset $S \subset [n]$, let $\Delta_S$ denote the convex hull of $\{\eb_i : i \in S \} \subset \RR^n$, 
where $\eb_i$ is the $i$-th unit vector of $\RR^n$. 
For a collection $\Fc$ of subsets of $[n]$, we set $$\Pc_\Fc := \sum_{S \in \Fc}\Delta_S.$$
Clearly, $\Pc_\Fc$ is a lattice polytope.

Let $\Bc$ be a collection of subsets of $[n]$. We say that $\Bc$ is a {\em building set} if $\Bc$ satisfies 
\begin{itemize}
\item[(i)] if $I,J \in \Bc$ with $I \cap J \neq \emptyset$, then $I \cup J \in \Bc$; and 
\item[(ii)] $\{i\} \in \Bc$ for each $i=1,\ldots,n$. 
\end{itemize}
Note that the condition (ii) is just added only for convenience. 
The lattice polytope $\Pc_\Bc$ associated to a building set $\Bc$ is called a {\em nestohedron}. 
Since taking the Minkowski sum of some polytope and $\Delta_{\{i\}}$ is nothing but a parallel transformation, 
we may treat $\Bc \setminus \{\{1\},\ldots,\{n\}\}$ instead of $\Bc$. 

We consider the building set arising from a finite simple graph. 
Let $G$ be a finite simple graph on the vertex set $[n]$ with the edge set $E(G)$. 
Let $\Bc_G$ be a collection of all subsets $S$ of $[n]$ such that the induced subgraph of $G$ on $S$ is connected. 
Then it is easy to see that $\Bc_G$ is a building set, called a {\em graphical building set}. 
The nestohedron associated to a graphical building set is called a {\em graph associahedron}. 
Graph associahedra include the following important classes of smooth polytopes (see \cite[Section 8]{Postnikov}): 
\begin{itemize}
\item Let $K_n$ be a complete graph on $[n]$. Then the graph associahedron of $K_n$ is a (kind of) permutohedron. 
\item Let $G$ be a path graph on $[n]$, i.e., the graph whose edge set is $\{\{i,i+1\} : i =1,\ldots,n-1\}$. 
Then its graph associahedron is the {\em associahedron}, also known as the {\em Stasheff polytope}. 
\item Let $G$ be a cycle graph on $[n]$, i.e., the graph whose edge set is $\{\{i,i+1\} : i =1,\ldots,n-1\} \cup \{\{1,n\}\}$. 
Then its graph associahedron is the {\em cyclohedron}, also known as the {\em Bott--Taubes polytope}. 
\item Let $\Bc=\{ [i] : i=2,3,\ldots,n\} \cup \{\{1\},\{2\},\ldots,\{n\}\}$. Then $\Bc$ is a building set, but not a graphical one. 
The nestohedron $\Pc_\Bc$ is exactly the polytope studied by Pitman and Stanley \cite{PitmanStanley}, and  called the {\em Pitman--Stanley polytope}. 
\end{itemize}

It is proved in \cite[Proposition 7.10]{Postnikov} that the generalized permutohedron $P_n^Y(\{y_I\})$ is smooth 
if $\{I \subset [n] : y_I >0\}$ is a building set. 
Remark that this is not necessary, i.e., there are examples of smooth generalized permutohedra not associated with building sets.

\bigskip

\section{Generalized nested configurations}

In the present section, we explain the notion of 
generalized nested configurations introduced by 
Shibuta \cite[Section~3.3]{Shibuta} as an application of the results on Gr\"obner bases of contraction ideals.

Let $A \subset \ZZ_{\ge 0}^s$
and $B_i = \{\bb_1^{(i)},\dots,\bb_{\lambda_i}^{(i)} \}
\subset \ZZ^n$ ($i = 1,2,\dots, s$) be configurations.
Then the {\em generalized nested configuration}
arising from $A$ and $B_1,\dots,B_s$ is the configuration 
$A[B_1,\dots,B_s]$ in $\ZZ^n$ defined by
$$
A[B_1,\dots,B_s]:=
\left\{
\sum_{i=1}^s \sum_{j=1}^{\lambda_i} a_j^{(i)} \bb_j^{(i)}
:
a_j^{(i)} \in \ZZ_{\ge 0}, 
\left( \sum_{j=1}^{\lambda_1} a_j^{(1)},
\dots, 
\sum_{j=1}^{\lambda_s} a_j^{(s)}
   \right)
\in A
\right\}
.$$
This is a generalization of nested configurations introduced in \cite{AHOT}.

\begin{Example}
\label{exa1}
Let $A = \{(1,\dots,1)\} \subset \ZZ_{\ge 0}^s$
and $B_1,\dots,B_s \subset \ZZ^n$ be configurations.
Then $A[B_1,\dots,B_s] = \{ \bb_1 + \dots + \bb_s : \bb_i \in B_i  \}.$
\end{Example}

Shibuta \cite[Theorem~3.5]{Shibuta} proved the following. 

\begin{Proposition}
\label{ShibutaTheorem}
Let $K[\zb^{\pm 1}]=K[z_1^{\pm 1},\dots,z_n^{\pm 1}]$
be a Laurent polynomial ring over a field $K$ with $\deg (z_i) = \vb_i \in \QQ^d$
and let $\ub_1,\dots,\ub_s \in \QQ^d$ be rational vectors that are linearly independent over $\QQ$. 
Suppose that configurations $B_1,\dots,B_s$ in $\ZZ^n$
satisfies $B_i \subset \{ \bb \in \ZZ^n : \deg (\zb^\bb) = \ub_i\}$ for $i = 1,2,\dots,s$.
Let $A \subset \ZZ_{\ge 0}^s$ be a configuration
and let $B = B_1 \cup \dots \cup B_s$.
Then we have the following{\rm :}

\begin{itemize}
\item[{\rm (a)}]
If both $I_A$ and $I_B$ possess initial ideals of 
degree at most $m$, 
then so does $I_{A[B_1,\dots,B_s]}$.

\item[{\rm (b)}]
If both $I_A$ and $I_B$ possess squarefree initial ideals,
then so does $I_{A[B_1,\dots,B_s]}$.
\end{itemize}
\end{Proposition}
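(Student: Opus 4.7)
The plan is to present $I_{A[B_1,\dots,B_s]}$ in a polynomial ring whose variables are indexed by the natural generators of $K[A[B_1,\dots,B_s]]$, and then to transfer Gr\"obner basis information from $I_A$ and $I_B$ through a carefully chosen block term order. For each $\ab = (a_1,\dots,a_s) \in A$, call a tuple ${\bf c} = (c_j^{(i)})$ with $c_j^{(i)} \in \ZZ_{\ge 0}$ and $\sum_j c_j^{(i)} = a_i$ for every $i$ a \emph{factorization of $\ab$}. Let $U$ be the polynomial ring over $K$ with one variable $w_{\ab,{\bf c}}$ for each such pair, and let $\pi : U \to K[A[B_1,\dots,B_s]]$ be the surjection $w_{\ab,{\bf c}} \mapsto \prod_{i,j}(\zb^{\bb_j^{(i)}})^{c_j^{(i)}}$, so that $I_{A[B_1,\dots,B_s]} = \ker\pi$ by construction. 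The hypothesis that $\ub_1,\dots,\ub_s$ are $\QQ$-linearly independent is precisely what forces the $\ZZ^s$-grading $\deg(w_{\ab,{\bf c}}) = \ab$ to be respected by $\pi$, so that every binomial in $\ker\pi$ splits as a sum of multigraded pieces.

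Two natural families of binomials sit in $\ker\pi$. The \textbf{horizontal} ones come from $I_B$: for a fixed $\ab \in A$, any binomial in $I_B$ whose two monomials both have multidegree $\ab$ lifts directly to a binomial $w_{\ab,{\bf c}} - w_{\ab,{\bf c}'}$. The \textbf{vertical} ones come from $I_A$: after choosing once and for all a section $\sigma : \ab \mapsto {\bf c}(\ab)$ picking one factorization per element of $A$, each binomial $\prod y_{\ab} - \prod y_{\ab'} \in I_A$ lifts to a binomial in the variables $w_{\ab,\sigma(\ab)}$. Starting from reduced Gr\"obner bases $G_A$ of $I_A$ and $G_B$ of $I_B$ whose initial ideals are of degree at most $m$ (resp.\ squarefree), I would form $\widetilde G := \widetilde G_A \cup \widetilde G_B$, where $\widetilde G_A$ is the $\sigma$-lift of $G_A$ and $\widetilde G_B$ collects all horizontal lifts of $G_B$ as $\ab$ ranges over the multidegrees that actually appear in $U$. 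A block term order is then chosen on $U$ that restricts to the given order on each fiber (via the identification $w_{\ab,{\bf c}} \leftrightarrow \prod(x_j^{(i)})^{c_j^{(i)}}$) and refines to the given order on $I_A$ along the image of $\sigma$.

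The core task is to verify Buchberger's criterion for $\widetilde G$. Horizontal-horizontal S-pairs stay within a single fiber and reduce via $\widetilde G_B$; vertical-vertical S-pairs reduce via $\widetilde G_A$ up to a horizontal discrepancy absorbed by $\widetilde G_B$; mixed S-pairs split into these two cases by the $\ZZ^s$-multigrading. Once $\widetilde G$ is a Gr\"obner basis, the degree bound and the squarefreeness of the initial ideal transfer essentially by inspection: the initial monomial of each element of $\widetilde G_A$ has the same total degree and squarefree pattern as its preimage in $G_A$, while the initial monomials coming from $\widetilde G_B$ inherit their properties from $G_B$. I expect the genuine obstacle to be the mixed S-pair analysis, since the section $\sigma$ is not canonical and its image interacts with the fibers in a nontrivial way; making this work will rely crucially on the $\ZZ^s$-homogeneity forced by the linear independence of $\ub_1,\dots,\ub_s$, which lets one separate the vertical and horizontal parts of any remainder cleanly during reduction.
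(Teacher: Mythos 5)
Your proposal correctly identifies the ambient setup (variables indexed by factorizations, the $\ZZ^s$-grading coming from the linear independence of $\ub_1,\dots,\ub_s$, and a combination of Gr\"obner data from $I_A$ and $I_B$), but there is a genuine gap: the kernel $\ker\pi$ contains essential relations that come neither from $I_A$ nor from $I_B$, but from the combinatorics of factorization itself. Take $s=1$, $A=\{(2)\}$, and $B_1=\{\bb_1,\bb_2\}\subset\ZZ^2$ with $\bb_1,\bb_2$ linearly independent and of equal degree. Then $I_A=0$ and $I_B=0$, so your $\widetilde G_A$ and $\widetilde G_B$ are both empty, yet $I_{A[B_1]}$ is the toric ideal of the second Veronese of a segment, generated by $w_{(2),(2,0)}\,w_{(2),(0,2)}-w_{(2),(1,1)}^2$. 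This binomial already maps to zero in $K[\yb]$ before $\varphi_B$ is applied, so it lies in $\ker\varphi_A=I_{A[{\mathcal E}_1,\dots,{\mathcal E}_s]}$ and is invisible to your vertical lifts (which see only the one chosen factorization $\sigma(\ab)$ per element of $A$) and to your horizontal lifts (which only see $I_B$). Your $\widetilde G$ therefore fails to generate $\ker\pi$ even up to radical in this example.

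The fix, and what Shibuta actually does (the paper does not reprove the statement but cites \cite[Theorem~3.5]{Shibuta}, and records the Gr\"obner basis recipe as Proposition~\ref{contractionGB}), is to replace the $\sigma$-lift of $G_A$ by the reduced Gr\"obner basis of the \emph{expanded} ideal $I_{A[{\mathcal E}_1,\dots,{\mathcal E}_s]}$, which is where the Veronese/Segre relations live, and to pair it with elements ${\rm lift}(\yb^\ab\cdot f_i)$ where $f_i$ runs over the reduced Gr\"obner basis of $I_B$ and $\yb^\ab$ runs over module generators $\Gamma(\vb_i)$ that bring $f_i$ into a multidegree realizable inside $K[A[{\mathcal E}_1,\dots,{\mathcal E}_s]]$. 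Two further points your outline elides: (i) the nontrivial transfer lemma that $I_{A[{\mathcal E}_1,\dots,{\mathcal E}_s]}$ inherits a squarefree (resp.\ degree $\le m$) initial ideal from $I_A$ — this is precisely where the sorting-order machinery for Segre/Veronese configurations is needed; and (ii) a binomial in $G_B$ need not have a multidegree expressible as a sum of elements of $A$, so it cannot be ``lifted directly'' into $U$; one must first multiply by a monomial and then take the normal form modulo $I_{A[{\mathcal E}_1,\dots,{\mathcal E}_s]}$, which is what the ${\rm lift}$ operator encodes and what makes the Buchberger analysis you defer to tractable.
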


Next, we explain how to construct a corresponding Gr\"obner basis of $I_{A[B_1,\dots,B_s]}$ in Proposition~\ref{ShibutaTheorem}.
Work with the same assumption as in Proposition~\ref{ShibutaTheorem}.
Let ${\mathcal E}_i = \{\eb_1^{(i)},\dots,\eb_{\lambda_i}^{(i)}\} \subset \bigoplus_{i=1}^s \bigoplus_{j=1}^{\lambda_i}\ZZ \eb_j^{(i)} $
for $i = 1,2,\dots,s$.
We define polynomial rings $K[\xb]$ and $K[\yb] $
over a field $K$ by 
\begin{eqnarray*}
K[\xb] &=& K[x_\ab : \ab \in A[{\mathcal E}_1,\dots,{\mathcal E}_s] ],\\
K[\yb] &=& K[y_j^{(i)} : i \in [s], j \in [\lambda_i]].
\end{eqnarray*}
Then the toric ideal $I_{A[{\mathcal E}_1,\dots,{\mathcal E}_s]}$ of $A[{\mathcal E}_1,\dots,{\mathcal E}_s]$ is the kernel of ring homomorphism
$$
\varphi_{A[{\mathcal E}_1,\dots,{\mathcal E}_s]}: K[\xb] \rightarrow K[\yb] , \ \ \varphi_{A[{\mathcal E}_1,\dots,{\mathcal E}_s]} (x_\ab) = \yb^\ab,
$$
and the toric ideal $I_B$ of $B$ is the kernel of ring homomorphism
$$
\varphi_B : K[\yb] \rightarrow K[\zb^{\pm 1}] , \ \ \varphi_B (y_j^{(i)}) = \zb^{\bb_j^{(i)}}.
$$
Then we have
$$
I_{A[B_1,\dots,B_s]}=
\ker (\varphi_B \circ \varphi_{A[{\mathcal E}_1,\dots,{\mathcal E}_s]})
=
\varphi_{A[{\mathcal E}_1,\dots,{\mathcal E}_s]}^{-1}(
\ker (\varphi_B))
=\varphi_{A[{\mathcal E}_1,\dots,{\mathcal E}_s]}^{-1}( I_B).
$$
Given an element $q$ of the toric ring 
$
K[A[{\mathcal E}_1,\dots,{\mathcal E}_s]] 
(= 
{\rm im} (\varphi_{A[{\mathcal E}_1,\dots,{\mathcal E}_s]}) )$, there exists a unique polynomial $\tilde{q} \in K[\xb]$ such that $\varphi_{A[{\mathcal E}_1,\dots,{\mathcal E}_s]} (\tilde{q}) = q$ and 
any monomial of $\tilde{q}$ does not belong to 
the initial ideal of $I_{A[{\mathcal E}_1,\dots,{\mathcal E}_s]}$.
Then we define ${\rm lift} (q) = \tilde{q}$.
Since $\ub_1,\dots,\ub_s$ are linearly independent over $\QQ$,
it follows that $I_B$ is homogeneous with respect to a multi-grading $\deg(y_j^{(i)}) = \eb_i \in \ZZ^s$.
With respect to this grading, let $K[\yb] = \bigoplus_{\ub \in \ZZ^s} K[\yb]_{\ub}$, 
where $K[\yb]_{\ub}$ is the $K$-vector space spanned by all monomials in $K[\yb]$ of multi-degree $\ub$.
A corresponding Gr\"obner basis can be constructed by the following way: 

\begin{Proposition}[{\cite[Proposition~2.28]{Shibuta}}]\label{contractionGB}
Work with the same notation and assumptions as above.
Let $G$ be the reduced Gr\"obner basis of $I_{A[{\mathcal E}_1,\dots,{\mathcal E}_s]}$,
and let $F = \{f_1,\dots,f_\ell\}$ be that of $I_B$ with $\deg (f_i) = \vb_i$.
Then
$$
G \cup \{ {\rm lift} (\yb^\ab \cdot f_i) : i \in [\ell] , \ \yb^\ab \in \Gamma (\vb_i)\}
$$
is a Gr\"obner basis of $I_{A[B_1,\dots,B_s]}$,
where each $\Gamma (\vb_i)$ is the minimal set of monomial generators of 
$K[A[{\mathcal E}_1,\dots,{\mathcal E}_s]]$-submodule
$$
\bigoplus_{ \yb^{\ub + \vb_i} \in K[A[{\mathcal E}_1,\dots,{\mathcal E}_s]]} K[\yb]_{\ub}
$$
of $K[\yb]$. 
\end{Proposition}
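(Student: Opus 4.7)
The plan is to verify directly that the proposed set is a Gr\"obner basis of $I_{A[B_1,\dots,B_s]} = \varphi^{-1}(I_B)$, where $\varphi := \varphi_{A[{\mathcal E}_1,\dots,{\mathcal E}_s]}$. A preliminary step is to fix a monomial order $\prec$ on $K[\xb]$ that is compatible, via $\varphi$, with the order used for $F$, in the sense that $\varphi$ induces an order-preserving bijection between the standard monomials of $I_{A[{\mathcal E}_1,\dots,{\mathcal E}_s]}$ (those not in its initial ideal) and their $\varphi$-images in $K[A[{\mathcal E}_1,\dots,{\mathcal E}_s]]$. Such an order exists by standard constructions, and this compatibility is what makes ${\rm lift}(\cdot)$ well-behaved with respect to initial terms.

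Membership in the ideal is routine: $G \subset I_{A[{\mathcal E}_1,\dots,{\mathcal E}_s]} = \ker \varphi \subset \varphi^{-1}(I_B)$, and for each lifted element, $\varphi({\rm lift}(\yb^\ab f_i)) = \yb^\ab f_i \in I_B$, whence ${\rm lift}(\yb^\ab f_i) \in \varphi^{-1}(I_B) = I_{A[B_1,\dots,B_s]}$.

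For the Gr\"obner basis property, take any nonzero $h \in I_{A[B_1,\dots,B_s]}$ and reduce modulo $G$. Either $h$ reduces to $0$ (and its leading monomial is already accounted for by $G$), or the remainder $h'$ still lies in $I_{A[B_1,\dots,B_s]}$ with support in the standard monomials, so $\varphi(h')$ is a nonzero element of $I_B \cap K[A[{\mathcal E}_1,\dots,{\mathcal E}_s]]$ whose leading monomial equals $\varphi({\rm in}_\prec(h'))$ by order compatibility. Since $F$ is a Gr\"obner basis of $I_B$, some ${\rm in}(f_i)$ divides ${\rm in}(\varphi(h'))$, say ${\rm in}(\varphi(h')) = \yb^\ab \cdot {\rm in}(f_i)$. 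The product sitting in $K[A[{\mathcal E}_1,\dots,{\mathcal E}_s]]$, together with the multi-grading $\deg(y_j^{(i)}) = \eb_i$, forces $\yb^{\ub + \vb_i} \in K[A[{\mathcal E}_1,\dots,{\mathcal E}_s]]$ (where $\ub = \deg \yb^\ab$), placing $\yb^\ab$ in the submodule whose minimal monomial generators are $\Gamma(\vb_i)$. Factoring $\yb^\ab = m \cdot \yb^{\ab'}$ with $\yb^{\ab'} \in \Gamma(\vb_i)$ and $m$ a monomial of $K[A[{\mathcal E}_1,\dots,{\mathcal E}_s]]$, and pulling back via ${\rm lift}$, the order compatibility yields that ${\rm in}_\prec({\rm lift}(\yb^{\ab'} f_i))$ divides ${\rm in}_\prec(h')$, which is what is required.

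I expect the principal obstacle to be the careful coordination between monomial orders and the lift operation: the order on $K[\xb]$ must be chosen so that $\varphi$ on standard monomials is order-preserving, and one must track that ${\rm lift}(\yb^{\ab'} f_i)$ has the expected initial monomial, namely the one corresponding under $\varphi$ to $\yb^{\ab'} \cdot {\rm in}(f_i)$. Once this bookkeeping is in place, the submodule definition of $\Gamma(\vb_i)$ supplies exactly the finite set of ``building blocks'' needed to cover every leading-term divisibility, and the argument closes.
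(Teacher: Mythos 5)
This proposition is quoted verbatim from Shibuta \cite[Proposition~2.28]{Shibuta}; the paper states it without reproducing any proof, so there is no ``paper's own proof'' to compare against. Your attempt should therefore be judged on its own merits as a reconstruction of Shibuta's argument.

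Your high-level architecture is the natural one for contraction ideals: pick a monomial order on $K[\xb]$ that refines, via $\varphi := \varphi_{A[{\mathcal E}_1,\dots,{\mathcal E}_s]}$, the chosen order on $K[\yb]$ (order first by $\varphi$-image, tie-break inside fibers); then membership of $G$ and of the lifts in $\varphi^{-1}(I_B)$ is immediate; then given $h \in I_{A[B_1,\dots,B_s]}$, either $\mathrm{in}(h)$ is already caught by $\mathrm{in}(G)$, or after full reduction modulo $G$ the remainder $h'$ is supported on standard monomials, $\varphi(h') \in I_B \cap K[A[{\mathcal E}_1,\dots,{\mathcal E}_s]]$ is nonzero, and $\mathrm{in}(\varphi(h')) = \yb^\ab\cdot\mathrm{in}(f_i)$ for some $\yb^\ab$ in the relevant module. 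All of this is fine, and the observation that $\yb^\ab$ then lies in the $K[A[{\mathcal E}_1,\dots,{\mathcal E}_s]]$-module with generators $\Gamma(\vb_i)$ is correct.

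The genuine gap is the very last step. Writing $\sigma$ for the standard-monomial section of $\varphi$ (so $\mathrm{in}(h') = \sigma(\yb^\ab\cdot\mathrm{in}(f_i))$ and $\mathrm{in}(\mathrm{lift}(\yb^{\ab'}f_i)) = \sigma(\yb^{\ab'}\cdot\mathrm{in}(f_i))$), factoring $\yb^\ab = m\,\yb^{\ab'}$ with $m$ a monomial of $K[A[{\mathcal E}_1,\dots,{\mathcal E}_s]]$ and $\yb^{\ab'}\in\Gamma(\vb_i)$, you assert that order compatibility gives $\sigma(\yb^{\ab'}\cdot\mathrm{in}(f_i)) \mid \sigma(\yb^\ab\cdot\mathrm{in}(f_i))$. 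This does not follow: $\sigma$ is not multiplicative, and $\sigma(p)\,\sigma(m)$ need not equal $\sigma(pm)$ (it can be a strictly larger, non-standard monomial lying in $\mathrm{in}(I_{A[{\mathcal E}_1,\dots,{\mathcal E}_s]})$). Worse, you cannot rescue the divisibility by appealing to $\mathrm{in}(G)$ instead, because $\sigma(\yb^\ab\cdot\mathrm{in}(f_i))$ is by construction a standard monomial and hence lies \emph{outside} $\langle \mathrm{in}(G)\rangle$. So, with an arbitrary decomposition $\yb^\ab = m\,\yb^{\ab'}$, the claimed divisibility can simply fail. You flagged ``coordination between monomial orders and the lift operation'' as the principal obstacle, which is the right instinct, but that is precisely the point left unresolved: Shibuta's actual proof has to do more work here (choosing the decomposition and the order carefully, or arguing via Buchberger's criterion) and your sketch does not supply it.
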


\section{Proof of Theorem \ref{main}}

Recall that the main object of this paper is
\begin{eqnarray}
\label{gp}
P_n^Y(\{y_I\})=\sum_{I \subset [n]}y_I \Delta_I,
\end{eqnarray}
where $\{y_I\}$ is a given collection of parameters with $y_I \in \ZZ_{\geq 0}$. 
Since each $y_I$ is a nonnegative integer, the equation~(\ref{gp}) can be rewritten as
$$
P_n^Y(\{y_I\})=\sum_{I \subset [n]} (
 \underbrace{\Delta_I + \cdots + \Delta_I}_{y_I}).
$$
Hence $P_n^Y(\{y_I\})$ coincides with 
\begin{eqnarray}
\label{tuplesum}
\Pc_\Fc = \Delta_{S_1} + \cdots + \Delta_{S_m},
\end{eqnarray}
where $\Fc = (S_1, \dots ,S_m)$ is a tuple of nonempty subsets $S_i \subset [n]$ such that each $I \subset [n]$
appears $y_I$ times in $\Fc$.
In order to study the Minkowski sum (\ref{tuplesum}), 
we consider the {\em Cayley sum} 
$$
\Qc_\Fc := {\rm conv} (
(\Delta_{S_1} \times \eb_1),  \dots , (\Delta_{S_m} \times \eb_m)
)
\subset \RR^{m+n}
$$
of $\Delta_{S_1}, \dots, \Delta_{S_m}$.
Let $G$ be a bipartite graph on the vertex set $\{1,2,\dots,n\} \cup \{1',2',\dots,m'\}$
whose edge set is
$
\{
\{j, i'\} : j \in [n], i \in [m] , j \in S_i
\}
.$
Then $\Qc_\Fc$ coincides with the {\em edge polytope} \cite[Section 5.2]{binomialideals} of $G$.
Here the edge polytope $\Pc(G)$ of a graph $G$ on the vertex set $[d]$ is the convex hull of 
$$
\{ \eb _i + \eb_j \in \RR^d : \{i,j\} \mbox{ is an edge of } G \}.
$$
Postnikov \cite{Postnikov} calls $\Pc(G)$ a {\em root polytope} of $G$.
The following is known. 

\begin{Proposition}[{\cite[Theorem~5.24]{binomialideals}}]\label{bipartite_unimodular}
Let $G$ be a bipartite graph.
Then the edge polytope $\Pc(G)$ of $G$ is unimodular
(every triangulation of $\Pc (G)$ is unimodular) and IDP.
In particular, $\Qc_\Fc$ is unimodular and IDP for any $\Fc$.
\end{Proposition}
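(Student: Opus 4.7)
The crux is the classical combinatorial fact that the vertex-edge incidence matrix $M_G$ of a bipartite graph $G$ is totally unimodular, i.e., every square submatrix has determinant in $\{-1,0,1\}$. First I would recall the standard proof: given a bipartition $[d]=U \sqcup V$, induct on the size of a square submatrix $N$. Each column of $M_G$ has exactly two nonzero entries, one in a row indexed by $U$ and one in a row indexed by $V$. If some column of $N$ has at most one nonzero entry, expand along it and invoke induction. Otherwise, $\sum_{i \in U} (\text{row } i) = \sum_{i \in V}(\text{row } i)$ in $N$, so $\det N = 0$.

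Next I would invoke the standard consequence for configurations $\Ac = \{\ab_1,\ldots,\ab_m\}$ whose matrix is totally unimodular: every linearly independent subset of $\Ac$ extends to a $\ZZ$-basis for the lattice it spans. Equivalently, any simplex whose vertices lie in $\Ac$ is unimodular. Applied to $\Ac = \{\eb_i+\eb_j : \{i,j\} \in E(G)\}$, this shows every triangulation of $\Pc(G)$ using configuration points as $0$-simplices is unimodular. From here, IDP follows by a standard argument: given $\alpha \in n\Pc(G) \cap \ZZ^d$, a unimodular triangulation of $\Pc(G)$ induces one of $n\Pc(G)$; choose a simplex $\sigma$ containing $\alpha$, and unimodularity makes the barycentric coordinates of $\alpha$ in $\sigma$ add up to $n$ as nonnegative integers, yielding a decomposition $\alpha = \alpha_1+\cdots+\alpha_n$ with $\alpha_i \in \Pc(G) \cap \ZZ^d$.

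For the ``in particular'' assertion, I would unpack the definition of $\Qc_\Fc$: its vertex set is $\{\eb_j + \eb_{i'} : i \in [m],\ j \in S_i\} \subset \RR^{n+m}$, which is exactly the vertex set of the edge polytope of the bipartite graph on $[n] \sqcup \{1',\ldots,m'\}$ with edges $\{\{j,i'\} : j \in S_i\}$. Hence the general result applies verbatim. The only mildly subtle point, and the one that distinguishes ``some triangulation is unimodular'' from ``every triangulation is unimodular,'' is that total unimodularity of the full matrix $M_G$ forces every $d$-simplex in the configuration to have normalized volume $1$; this is what gives the stronger statement, and is the one piece I would make sure to state explicitly rather than just cite.
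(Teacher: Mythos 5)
Your proof is correct and follows the standard route: total unimodularity of the vertex--edge incidence matrix of a bipartite graph implies that every full-dimensional simplex on the configuration points is unimodular, hence every triangulation is unimodular, hence IDP; and $\Qc_\Fc$ is literally the edge polytope of the bipartite graph built from $\Fc$. The paper itself does not prove this proposition but cites \cite[Theorem~5.24]{binomialideals}, and that reference's argument is the same total-unimodularity argument you give, so there is nothing to compare beyond noting agreement.
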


It is known by \cite[Proposition 14.12]{Postnikov} that
$(\Delta_{S_1} \cap \ZZ^n) + \dots + (\Delta_{S_m}\cap \ZZ^n)=\Pc_\Fc \cap \ZZ^n $.
Moreover, the following is also known. 
\begin{Proposition}[{\cite[Theorem~0.4]{TCayley}}]\label{TsuchiyaCayley}
Let $P_1,\ldots,P_m \subset \RR^n$ be lattice polytopes. 
If the Cayley sum of $P_1,\ldots,P_m$ is IDP, then the Minkowski sum $\sum_{i=1}^m a_iP_i$ is IDP for any $a_1,\ldots,a_m \in \ZZ_{\geq 0}$. 
\end{Proposition}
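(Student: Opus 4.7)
The plan is to exploit the well-known correspondence between lattice points of the Cayley sum at a prescribed height and lattice points of the corresponding Minkowski sum. Write $Q := \conv\bigl((P_i \times \eb_i) : i \in [m]\bigr) \subset \RR^{n+m}$ for the Cayley sum of $P_1,\dots,P_m$ and set $k = a_1 + \cdots + a_m$. First I would record the fiber description: a point of $Q$ whose last $m$ coordinates equal $(\lambda_1,\dots,\lambda_m)$ (necessarily a point of the standard simplex $\Delta_{m-1}$) has first $n$ coordinates ranging over $\sum_i \lambda_i P_i$. Dilating, the lattice points of $kQ$ whose last $m$ coordinates equal $(a_1,\dots,a_m)$ are exactly the points of the form $(v,a_1,\dots,a_m)$ with $v \in \bigl(\sum_i a_i P_i\bigr)\cap\ZZ^n$. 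In particular the lattice points of $Q$ itself are the vectors $(v,\eb_i)$ with $i \in [m]$ and $v \in P_i \cap \ZZ^n$.

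Next, I would take any positive integer $N$ and any $\alpha \in N\bigl(\sum_i a_i P_i\bigr) \cap \ZZ^n$, and lift it to the lattice point $\widehat{\alpha} := (\alpha, Na_1, \dots, Na_m)$. The last $m$ coordinates sum to $Nk$, and by the fiber description this lies in $(Nk)Q \cap \ZZ^{n+m}$. Since $Q$ is assumed IDP, I may write $\widehat{\alpha} = \sum_{j=1}^{Nk}\beta_j$ with each $\beta_j$ a lattice point of $Q$, necessarily of the form $\beta_j = (v_j, \eb_{\sigma(j)})$ for some function $\sigma : [Nk] \to [m]$ and $v_j \in P_{\sigma(j)} \cap \ZZ^n$. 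Matching the last $m$ coordinates of $\widehat{\alpha}$ forces $|\sigma^{-1}(i)| = Na_i$ for every $i \in [m]$.

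Finally I would regroup. For each $i$, partition the $Na_i$ indices in $\sigma^{-1}(i)$ arbitrarily into $N$ blocks of size $a_i$, and for each $g \in [N]$ let $\gamma_g$ be the sum over $i$ of the $a_i$ chosen lattice points of $P_i$ in the $g$-th block. Then $\gamma_g \in \bigl(\sum_i a_i P_i\bigr) \cap \ZZ^n$ because it is a lattice point obtained as a sum of $a_i$ lattice points from $P_i$ for each $i$, and $\sum_{g=1}^N \gamma_g$ recovers the full sum of the $v_j$'s, namely $\alpha$. This yields the required IDP decomposition of $\alpha$.

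The argument is essentially bookkeeping on top of one geometric observation, so I do not expect a real obstacle. The one point meriting care is confirming that lattice points of $(Nk)Q$ at height $(Na_1,\dots,Na_m)$ correspond bijectively to lattice points of $\sum_i a_i P_i$ scaled by $N$; this is exactly the content of the fiber description recalled in the first paragraph, and it is what makes the IDP property of $Q$ transfer cleanly to every Minkowski sum $\sum_i a_i P_i$.
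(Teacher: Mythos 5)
Your argument is correct, and it is the natural one. A small caveat about the comparison you asked for: the paper does not supply its own proof of this proposition --- it cites it verbatim as \cite[Theorem~0.4]{TCayley}. So there is nothing internal to the paper to compare against; instead, your proposal is a self-contained replacement for the external citation, carried out via the standard Cayley trick. Each step checks out: the fiber of $kQ$ over the height vector $(a_1,\dots,a_m)$, with $k=\sum a_i$, is precisely $\bigl(\sum_i a_iP_i\bigr)\times\{(a_1,\dots,a_m)\}$; a lattice point $\alpha\in N\bigl(\sum_i a_iP_i\bigr)$ lifts to $\widehat\alpha\in (Nk)Q\cap\ZZ^{n+m}$; IDP of $Q$ decomposes $\widehat\alpha$ into $Nk$ lattice points of $Q$, each necessarily of the form $(v_j,\eb_{\sigma(j)})$ with $v_j\in P_{\sigma(j)}\cap\ZZ^n$; the height constraint forces $|\sigma^{-1}(i)|=Na_i$; and arbitrarily grouping the $Na_i$ indices in $\sigma^{-1}(i)$ into $N$ blocks of size $a_i$ produces $\gamma_1,\dots,\gamma_N\in\bigl(\sum_i a_iP_i\bigr)\cap\ZZ^n$ with $\sum_g\gamma_g=\alpha$. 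The degenerate cases $a_i=0$ (empty fiber for coordinate $i$) and $k=0$ (the Minkowski sum is the origin) are handled automatically, so the proof is complete as written.
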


We are now in a position to prove Theorem~\ref{main}.

\begin{proof}[Proof of Theorem~\ref{main}]
(a) Propositions~\ref{bipartite_unimodular} and \ref{TsuchiyaCayley} imply that $\Pc_\Fc$ is IDP.
%

(b)
We show that the toric ideal of
$\Pc_\Fc$ has a squarefree initial ideal by
 using Shibuta's theory of contraction ideals \cite{Shibuta}.
Let 
\begin{eqnarray*}
K[\xb] &=& K[x_{j_1, \dots, j_m} : j_k \in S_k \  ( 1 \le k \le m )],\\
K[\yb] &=& K[y_j^{(i)} : i \in [m], j \in S_i],\\
K[\zb, \wb] &=& K[z_1, \dots, z_n, w_1, \dots, w_m]
\end{eqnarray*}
be the polynomial rings over a field $K$.
We now consider the ring homomorphisms
$$
\varphi_A : K[\xb] \rightarrow K[\yb] , \ \ \varphi_A (x_{j_1, \dots, j_m}) = y_{j_1}^{(1)} \dots y_{j_m}^{(m)},
$$
$$
\varphi_B : K[\yb] \rightarrow K[\zb, \wb] , \ \ \varphi_B (y_j^{(i)}) =z_j w_i.
$$
Then we have the following:
\begin{itemize}
\item
The kernel of $\varphi_A$ is the toric ideal of the Segre product of the polynomial rings $ K[y_j^{(1)} : j \in S_1], \dots, K[y_j^{(m)} : j \in S_m]$. 
It is known that $\ker (\varphi_A)$ has a squarefree quadratic initial ideal with respect to 
a ``sorting order'', see \cite[Section~9.5]{binomialideals}.
Let $G$ be the corresponding quadratic Gr\"obner basis of $\ker (\varphi_A)$.

\item
The kernel of $\varphi_B$ is the toric ideal of $\Qc_\Fc$.
By \cite[Theorem~4.17]{binomialideals}, the initial ideal of 
$\ker (\varphi_B)$ is generated by squarefree monomials with respect to any monomial order
since $\Qc_\Fc$ is unimodular (Proposition~\ref{bipartite_unimodular}). 
Let $\{f_1, \dots, f_\ell\}$ be a Gr\"obner basis
of $I_{\Qc_\Fc}$.

\item
The composition $\varphi_B \circ \varphi_A : K[\xb] \rightarrow K[\zb, \wb]$ satisfies
$$
\varphi_B \circ \varphi_A (x_{j_1, \dots, j_m}) =
\varphi_B ( y_{j_1}^{(1)} \dots y_{j_m}^{(m)})
= z_{j_1} \dots z_{j_m} w_1 \dots w_m.
$$
Since the monomial $ w_1 \dots w_m$ appears in $\varphi_B \circ \varphi_A (x_{j_1, \dots, j_m}) $ for all variables $x_{j_1, \dots, j_m}$,
it follows that
the kernel of $\varphi_B \circ \varphi_A$ is equal to the kernel of
a homomorphism $\psi : K[\xb] \rightarrow K[\zb]$, where 
$
\psi (x_{j_1, \dots, j_m}) = z_{j_1} \dots z_{j_m}.
$
Then the kernel of $\psi$ is the toric ideal of 
the configuration $\Ac = \{ \sum_{k=1}^m \eb_{j_k} :    j_k \in S_k \  ( 1 \le k \le m )  \}$.
Here we must regard $\Ac$ as a multiset.
On the other hand, $\Ac$ coincides with the vertex set of $\Pc_\Fc$
{\em as a set}.
Thus the kernel of 
$\psi$ is equal to $I_{\Pc_\Fc} +J$, where $I_{\Pc_\Fc}$ is the toric ideal of $\Pc_\Fc$
and $J$ is generated by the linear forms $x_{j_1, \dots, j_m} - x_{k_1, \dots, k_m}$ such that  $\psi(x_{j_1, \dots, j_m}) = \psi( x_{k_1, \dots, k_m})$.
Note that $\psi(x_{j_1, \dots, j_m}) = \psi( x_{k_1, \dots, k_m})$
if and only if
$\sort(j_1 \cdots j_m) = \sort( k_1 \cdots k_m)$.
\end{itemize}

If we set $\deg z_j = {\bf 0}\in \QQ^m$, $\deg w_i = \eb_i \in \QQ^m$, 
and $B_i = \{(\ab, \eb_i) \in \ZZ^{m+n}: \ab \in \Delta_{S_i} \cap \ZZ^n \}$,
then $\eb_1,\dots,\eb_m$ are linearly independent over $\QQ$ 
and the assumptions in Proposition~\ref{ShibutaTheorem} are satisfied. 
Note that $\Qc_\Fc$ is the convex hull of $B = B_1 \cup \cdots \cup B_m$.
Since both $\ker (\varphi_A)$ and $\ker (\varphi_B)$ have squarefree initial ideals,
so does $\ker (\varphi_B \circ \varphi_A)$ by Proposition~\ref{ShibutaTheorem}.
Let ${\mathcal G}$ be the corresponding reduced Gr\"obner basis of $\ker (\varphi_B \circ \varphi_A)$.
By Proposition~\ref{contractionGB}, we have 
${\mathcal G} = G \cup \{ {\rm lift} (\yb^\ab \cdot f_i) : i \in [\ell] , \ \yb^\ab \in \Gamma (\vb_i)\}$.
Then ${\mathcal G} \setminus J$ is a Gr\"obner basis of $I_{\Pc_\Fc} $.
Thus the initial ideal of $I_{\Pc_\Fc} $ is squarefree. 

(c)
Since $G$ consists of quadratic binomials, it is enough to show that 
each ${\rm lift} (\yb^\ab \cdot f_i)$  is generated by the binomials in $ \ker (\varphi_B \circ \varphi_A)$ of degree $\le 2$.
%
It is known by \cite[Corollary~5.12]{binomialideals} that each $f_k$
corresponds to an even cycle in the bipartite graph.
In fact, if $\deg (f_k) =r $, then there exists a cycle $C = (q_1, p_1',q_2, p_2', \dots, q_r, p_r')$ of length $2r$ in the bipartite graph such that
$$
f_k = 
y_{q_1}^{(p_1)} y_{q_2}^{(p_2)} \dots y_{q_r}^{(p_r)} 
-
y_{q_2}^{(p_1)} y_{q_3}^{(p_2)} \dots y_{q_r}^{(p_{r-1})} y_{q_1}^{(p_r)} .
$$
By changing indices if needed, we may assume that
$C = (1,1',2,2',\dots, r, r')$
and 
$$
f_k = 
y_1^{(1)} y_{2}^{(2)} \dots y_r^{(r)} 
-
y_2^{(1)} y_{3}^{(2)} \dots y_r^{(r-1)}  y_1^{(r)}.
$$
Then ${\rm lift} (\yb^\ab \cdot f_k)$
is of the form
$$
{\rm lift} (\yb^\ab \cdot f_k)
=
x_{j_1^{1}, \dots, j_m^{1}}
x_{j_1^{2}, \dots, j_m^{2}}
\dots
x_{j_1^{s}, \dots, j_m^{s}}
-
x_{k_1^{1}, \dots, k_m^{1}}
x_{k_1^{2}, \dots, k_m^{2}}
\dots
x_{k_1^{s}, \dots, k_m^{s}},
$$
where $1 \le s \le r$.
Suppose that $s \ge 3$.
Since $\varphi_A ({\rm lift} (\yb^\ab \cdot f_k)) = \yb^\ab \cdot f_k$,
we have
\begin{equation}
\label{soeji}
\{k_\alpha^{1}, \dots, k_\alpha^{s}\}
=
\begin{cases}
\{j_\alpha^{1}, \dots, j_\alpha^{s}, \alpha+1\} 
\setminus \{\alpha \}
 & \mbox{ if } \alpha = 1, 2, \dots, r-1,\\
\\
\{j_\alpha^{1}, \dots, j_\alpha^{s}, 1\} 
\setminus \{\alpha \}
& \mbox{ if } \alpha =r, \\
\\
\{j_\alpha^{1}, \dots, j_\alpha^{s}\} & \mbox{ if } \alpha = r+1, r+2, \dots,  m
\end{cases}
\end{equation}
as multisets.
Note that $\ker (\varphi_B \circ \varphi_A)$
possesses the quadratic binomials of the form 
\begin{equation}
\label{2ji}
x_{j_1 , \dots , j_\xi ,\dots, j_m} 
x_{j_1' , \dots , j_\xi' ,\dots, j_m'} 
-
x_{j_1 , \dots , j_\xi' ,\dots, j_m} 
x_{j_1' , \dots , j_\xi ,\dots, j_m'}
\in \ker (\varphi_A).
\end{equation}
For example, we have
\begin{eqnarray*}
{\rm lift} (\yb^\ab \cdot f_k)
&=&
x_{j_1^{1},  \dots,  j_m^{1}}
x_{j_1^{2},  \dots,  j_m^{2}}
\dots
x_{j_1^{s},  \dots,  j_m^{s}}
-
x_{k_1^{1},  \dots,  k_{m-1}^{1}, k_m^{2}}
x_{k_1^{2},  \dots,  k_{m-1}^{2}, k_m^{1}}
\dots
x_{k_1^{s},  \dots,  k_m^{s}}\\
&&+ \ 
x_{k_1^{3},   \dots,  k_m^{3}}
\dots
x_{k_1^{s},  \dots,  k_m^{s}}
\left(
x_{k_1^{1},  \dots,  k_{m-1}^{1}, k_m^{2}}
x_{k_1^{2},  \dots,  k_{m-1}^{2}, k_m^{1}}
-
x_{k_1^{1},  \dots,  k_m^{1}}
x_{k_1^{2},  \dots,  k_m^{2}}
\right).
\end{eqnarray*}
By repeating this procedure, the equation (\ref{soeji}) guarantees that 
  ${\rm lift} (\yb^\ab \cdot f_k)$ is generated by quadratic binomials of the form (\ref{2ji})
and the binomial
$$
g=
\xb^\ub
(
x_{1,  2,  \dots,  r , j_{r+1}, \dots, j_m}
-
x_{2, \dots, r, 1, j_{r+1}, \dots, j_m}
).
$$
Then $x_{1,  2,  \dots,  r , j_{r+1}, \dots, j_m}
-
x_{2, \dots, r, 1, j_{r+1}, \dots, j_m}$ belongs to $J$.
\end{proof}

\bigskip

\end{document}